\title[On uniqueness of differential structures on orbifolds]{On uniqueness of differential 
structures on orbifolds}
\author{Marja Kankaanrinta} 
\address{Department of Mathematics\\University of Virginia \\\newline
         Charlottesville, VA 22903\\USA}
\email{mk5aq@virginia.edu}
\newtheorem{theorem}{Theorem}[section]   
\newtheorem{lemma}[theorem]{Lemma}
\newtheorem{prop}[theorem]{Proposition}         
\theoremstyle{definition}
\newtheorem{definition}[theorem]{Definition}
\numberwithin{equation}{section}
\begin{document}

%
%
%
%
%
%
%
%

\begin{abstract}  
It is known that every  ${\rm C}^r$-orbifold, 
$1\leq r\leq\infty$, 
has a compatible ${\rm C}^s$-differential structure, for every $s$,
where $r< s\leq\omega$.  We prove that if two reduced
${\rm C}^r$-orbifolds, $2\leq r\leq\omega$, are  ${\rm C}^2$-diffeomorphic,
then they are  ${\rm C}^r$-diffeomorphic. It follows that the compatible
${\rm C}^s$-differential structure on a reduced ${\rm C}^r$-orbifold, $2\leq r<s\leq\omega$,
is unique up to
a  ${\rm C}^s$-diffeomorphism.
\end{abstract}

\maketitle
\section{Introduction}

\noindent  In this note we study differentiable structures on orbifolds.  An orbifold is called reduced, if the actions of the local groups on the orbifold charts are effective. Orbifolds that are not reduced
can be diffeomorphic even if their orbifold atlases are quite different. Therefore, we only consider reduced orbifolds. We prove the following result:



\begin{theorem}
\label{mainthm}
Let $X$ and $Y$ be  reduced ${\rm C}^r$-differentiable orbifolds,
$2\leq r\leq\omega$. 
 If $X$ and $Y$ are  ${\rm C}^2$-diffeomorphic, then they are
 ${\rm C}^r$-diffeomorphic.
\end{theorem}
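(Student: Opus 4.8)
The plan is to show that the given $C^2$\nobreakdash-diffeomorphism $f\colon X\to Y$ can be deformed, through an arbitrarily small $C^2$\nobreakdash-perturbation, into a $C^r$\nobreakdash-diffeomorphism; since a $C^r$\nobreakdash-orbifold map which is $C^2$\nobreakdash-close to a diffeomorphism (in the strong topology) is itself a diffeomorphism, this suffices. The engine of the argument is an equivariant relative approximation lemma: if a finite group $G$ acts effectively and $C^r$\nobreakdash-differentiably on open sets $\tilde U,\tilde V\subseteq\mathbb R^n$, if $A\subseteq\tilde U$ is closed and $G$\nobreakdash-invariant, and if $h\colon\tilde U\to\tilde V$ is a $G$\nobreakdash-equivariant $C^2$\nobreakdash-diffeomorphism onto its image which is already $C^r$ near $A$, then $h$ can be approximated arbitrarily well in the strong $C^2$\nobreakdash-topology by a $G$\nobreakdash-equivariant $C^r$\nobreakdash-diffeomorphism coinciding with $h$ on $A$. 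This is proved by first invoking the classical relative Whitney approximation theorem to obtain a (non-equivariant) $C^r$\nobreakdash-approximation, and then restoring equivariance by averaging over $G$ with respect to a $G$\nobreakdash-invariant $C^r$ Riemannian metric, using the Riemannian \emph{center of mass} in a $G$\nobreakdash-invariant geodesically convex neighborhood; a $G$\nobreakdash-invariant cutoff supported away from $A$ is inserted so that the result is unchanged near $A$. In the real-analytic case $r=\omega$ one instead uses Grauert's real-analytic approximation theorem together with a real-analytic $G$\nobreakdash-invariant metric and averaging, after first reducing to the situation where $f$ is $C^\infty$ by the $C^\infty$ case of the theorem.

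Next I would globalize. Fix a locally finite orbifold atlas $\{(\tilde U_i,G_i,\varphi_i)\}_{i\in\mathbb N}$ of $Y$, together with shrinkings $K_i\subseteq\tilde U_i$ whose images still cover $Y$, chosen fine enough that $f$ lifts over each $\varphi_i(\tilde U_i)$ to an equivariant $C^2$\nobreakdash-diffeomorphism between charts of $X$ and $Y$. Proceeding inductively on $i$, I replace the current map on $\varphi_i(\tilde U_i)$ by an equivariant $C^r$\nobreakdash-diffeomorphism furnished by the lemma, taking the closed invariant set $A$ there to be the union of the $\overline{\varphi_j(K_j)}$ with $j<i$ meeting the chart, so that the modification does not disturb the charts already made $C^r$. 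Local finiteness guarantees that this process stabilizes to a well-defined orbifold map $g\colon X\to Y$ that is $C^r$ and lifts, chart by chart, to equivariant $C^r$\nobreakdash-diffeomorphisms; by choosing the successive approximations sufficiently fine one keeps $g$ as $C^2$\nobreakdash-close as desired to $f$ in the strong topology on all of $X$.

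Finally I would verify that $g$ is a $C^r$\nobreakdash-diffeomorphism of orbifolds. On each chart the lift of $g$ is a $G$\nobreakdash-equivariant $C^r$ map whose derivative is everywhere $C^2$\nobreakdash-close to that of the equivariant diffeomorphism lifting $f$, hence invertible, so these lifts are equivariant immersions, and equivariance together with invertibility of the derivative makes them equivariant local diffeomorphisms; on underlying spaces, a map $C^2$\nobreakdash-close in the strong topology to the homeomorphism $f$ is proper and injective, hence a homeomorphism. Thus $g$ is an orbifold isomorphism whose local lifts and their inverses are $C^r$, i.e.\ a $C^r$\nobreakdash-diffeomorphism, so $X$ and $Y$ are $C^r$\nobreakdash-diffeomorphic. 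The stated uniqueness corollary then follows: two compatible $C^s$\nobreakdash-differential structures on a reduced $C^r$\nobreakdash-orbifold, $2\leq r<s\leq\omega$, are $C^2$\nobreakdash-diffeomorphic via the identity (legitimate since $r\geq 2$), hence $C^s$\nobreakdash-diffeomorphic by the theorem applied with $s$ in place of $r$.

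The step I expect to be the main obstacle is the globalization while simultaneously preserving equivariance and the relative ("unchanged near $A$") condition: one must ensure that the equivariant averaging in the inductive step is compatible both with the charts already fixed and with the transition maps of the atlas, and that the accumulated $C^2$\nobreakdash-errors stay controlled in the strong topology so the diffeomorphism property is not lost "at infinity." A secondary difficulty is the real-analytic case, where partitions of unity are unavailable, so the chart-by-chart induction must be replaced by passage through the $C^\infty$ case followed by a global real-analytic approximation of the resulting $C^\infty$\nobreakdash-diffeomorphism.
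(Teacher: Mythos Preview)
Your approach is genuinely different from the paper's and, for finite $r$, is a reasonable outline; but the paper avoids precisely the obstacle you flag by never working chart-by-chart at all. Instead, it passes to the \emph{frame bundle}: for a reduced $n$-dimensional real analytic orbifold $Z$ one has a real analytic manifold ${\rm Fr}(Z)$ with a proper, effective, almost free ${\rm GL}_n(\mathbb R)$-action and ${\rm Fr}(Z)/{\rm GL}_n(\mathbb R)\cong Z$. Given the ${\rm C}^2$-diffeomorphism $f\colon X\to Y$, one first replaces $X,Y$ by real analytic models $X^\omega,Y^\omega$ via ${\rm C}^r$-diffeomorphisms (Theorem~8.4 of \cite{Ka}); then $f$ transports to a ${\rm C}^2$-diffeomorphism $g\colon X^\omega\to Y^\omega$, which by differentiating the local lifts induces a ${\rm GL}_n(\mathbb R)$-equivariant ${\rm C}^1$-diffeomorphism $\hat g\colon {\rm Fr}(X^\omega)\to {\rm Fr}(Y^\omega)$ of honest manifolds. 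Now a single citation (Corollary~IIa of \cite{IK}, an equivariant approximation theorem for proper $G$-manifolds) produces a ${\rm GL}_n(\mathbb R)$-equivariant \emph{real analytic} diffeomorphism between the frame bundles, which descends to the quotients and pulls back to a ${\rm C}^r$-diffeomorphism $X\to Y$. All of the delicate compatibility of equivariant modifications across overlapping charts that you identify as the main obstacle simply evaporates, because the approximation is performed once, globally, on a manifold.

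For $r<\omega$ your scheme is plausible but the globalization is not actually carried out: after you modify the lift on chart $i$ $G_i$-equivariantly, you must argue that the resulting orbifold map still admits, on each later chart $(\tilde U_j,G_j)$, a single $G_j$-equivariant ${\rm C}^2$ lift that is ${\rm C}^r$ over the already-processed region, so that your lemma can be applied again. This is believable (it ultimately rests on the Moerdijk--Pronk uniqueness of embeddings up to the group action), but it is exactly the step you label an obstacle, and you do not resolve it. For $r=\omega$ there is a more serious gap: you propose to first pass to the ${\rm C}^\infty$ case and then perform ``a global real-analytic approximation of the resulting ${\rm C}^\infty$-diffeomorphism,'' but you give no mechanism for doing this on an orbifold without partitions of unity. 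Grauert-type theorems and equivariant averaging live on a single chart or on a manifold; to apply them globally to an orbifold you would again need either a chart-by-chart induction (unavailable in the analytic category) or some global model --- which is precisely what the frame bundle supplies. The paper's route handles all $2\le r\le\omega$ uniformly and without this difficulty.
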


As usual, ${\rm C}^\infty$ and ${\rm C}^\omega$ mean smooth and real analytic, respectively.
A {\it ${\rm C}^r$-differentiable structure} on an orbifold $X$ means a maximal
${\rm C}^r$-atlas $\alpha$ on $X$. A ${\rm C}^s$-differentiable structure $\beta$ on $X$, $s>r$,
is called {\it compatible} with $\alpha$, if $\beta\subset\alpha$. In this case, every chart on
$\beta$ is a chart on $\alpha$. For a reduced orbifold $X$, this means 
equivalently that the identity map
on $X$ is a  ${\rm C}^r$-orbifold diffeomorphism $X(\alpha)\to X(\beta)$.
In \cite{Ka} (Theorems 7.4 and 8.2), we proved that every 
${\rm C}^r$-orbifold $X$,  $1\leq r\leq\infty$,
has a compatible ${\rm C}^s$-differential structure, for any $s$ such that $r<s\leq\omega$.

Let $X$ and $Y$ be reduced orbifolds equipped with ${\rm C}^s$-differential
structures $\alpha$ and $\beta$, respectively. Assume there is
a ${\rm C}^s$-diffeomorphism $f\colon X\to Y$. Then $\alpha$ has a
refinement $\alpha_0$ such that $f$ takes $\alpha_0$ to a refinement
$f(\alpha_0)$ of $\beta$ (Theorem \ref{samat3}).
Thus the existence result in \cite{Ka} together with
Theorem \ref{mainthm} imply the following:

\begin{theorem}
\label{unique}
Let $\alpha$ be a ${\rm C}^r$-differential structure on a reduced orbifold
$X$, $2\leq r\leq\infty$. There is a 
 ${\rm C}^s$-differentiable structure $\beta$ on
 $X$ compatible with $\alpha$, for every $s$, $r<s\leq\omega$, and
 $\beta$ is unique up to a  ${\rm C}^s$-diffeomorphism.
\end{theorem}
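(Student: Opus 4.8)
The plan is to obtain Theorem~\ref{unique} formally from two facts already in hand: the existence result of \cite{Ka} (Theorems 7.4 and 8.2) and Theorem~\ref{mainthm} above. The existence half is immediate: since $2\leq r\leq\infty$ we have in particular $1\leq r\leq\infty$, so \cite{Ka} produces, for each $s$ with $r<s\leq\omega$, a ${\rm C}^s$-differentiable structure $\beta$ on $X$ with $\beta\subset\alpha$, that is, compatible with $\alpha$. Nothing further is needed for this part.

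For the uniqueness half, suppose $\beta_1$ and $\beta_2$ are ${\rm C}^s$-differentiable structures on $X$, each compatible with $\alpha$; I must exhibit a ${\rm C}^s$-diffeomorphism between the orbifolds $X(\beta_1)$ and $X(\beta_2)$. By the compatibility criterion recalled in the introduction, the identity map of $X$ is a ${\rm C}^r$-orbifold diffeomorphism $X(\alpha)\to X(\beta_i)$ for $i=1,2$; composing one with the inverse of the other shows the identity map of $X$ is a ${\rm C}^r$-orbifold diffeomorphism $X(\beta_1)\to X(\beta_2)$. I also note that $X(\beta_1)$ and $X(\beta_2)$ are reduced, because being reduced is a property of the orbifold itself and does not depend on the choice of compatible differentiable structure.

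Since $r\geq 2$, a ${\rm C}^r$-orbifold diffeomorphism is in particular a ${\rm C}^2$-orbifold diffeomorphism, so $X(\beta_1)$ and $X(\beta_2)$ are ${\rm C}^2$-diffeomorphic. Now apply Theorem~\ref{mainthm} with the regularity exponent $s$ in the role of $r$: this is permitted because $s>r\geq 2$ forces $2\leq s\leq\omega$, and $X(\beta_1)$, $X(\beta_2)$ are reduced ${\rm C}^s$-orbifolds that are ${\rm C}^2$-diffeomorphic. The theorem then yields a ${\rm C}^s$-diffeomorphism $X(\beta_1)\to X(\beta_2)$, which is exactly the asserted uniqueness up to a ${\rm C}^s$-diffeomorphism. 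Theorem~\ref{samat3} serves here to explain why this is the right formulation: it guarantees that a ${\rm C}^s$-diffeomorphism of reduced ${\rm C}^s$-orbifolds carries a refinement of one atlas to a refinement of the other, so the only ambiguity in the choice of $\beta$ is precisely a ${\rm C}^s$-diffeomorphism.

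Thus the argument is short, and essentially all of the work resides in Theorem~\ref{mainthm}, the technical core of the paper. Granting that theorem, the only points that require care are bookkeeping ones: that compatibility $\beta\subset\alpha$ is equivalent to the identity map being a ${\rm C}^r$-orbifold diffeomorphism $X(\alpha)\to X(\beta)$ (so that two structures compatible with the same $\alpha$ are related via the identity map), that the implication ${\rm C}^r\Rightarrow{\rm C}^2$ holds also when $r=\omega$, and that the hypotheses of Theorem~\ref{mainthm} are satisfied after the relabelling $r\mapsto s$. I do not anticipate any obstacle beyond Theorem~\ref{mainthm} itself; any remaining subtlety lies only in checking that ``reduced'' and the compatibility relation transfer correctly between regularity classes.
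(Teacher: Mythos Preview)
Your proposal is correct and follows essentially the same approach as the paper: the paper derives Theorem~\ref{unique} directly from the existence result in \cite{Ka} together with Theorem~\ref{mainthm}, invoking Theorem~\ref{samat3} to explain that a ${\rm C}^s$-diffeomorphism carries a refinement of one atlas to a refinement of the other. Your write-up simply makes explicit the bookkeeping (compatibility via the identity map, reducedness, and the relabelling $r\mapsto s$) that the paper leaves implicit in the paragraph preceding the theorem.
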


The proof of Theorem \ref{mainthm} is based on using the frame bundle construction
for reduced orbifolds: The fact that the general linear group ${\rm GL}_n({\mathbb{R}})$ acts
properly on the frame bundle ${\rm Fr}(X)$ of a reduced $n$-dimensional orbifold allows us to
use approximation results (see \cite{IK}) for differentiable equivariant maps between the frame bundles of two reduced orbifolds.

{{\it Acknowledgements.} 
The author's research was supported by the visitors program of the
Aalto University during the academic year 2012 - 2013. The author would
like to thank the Department of  Mathematics and Systems Analysis 
of the Aalto University for its hospitality during her stay.}



\section{Definitions}
\label{defi}

\noindent We begin with the definition of an orbifold:

\begin{definition}
\label{orbichartdef}
Let $X$ be a topological space and let $n\in {\mathbb{N}}$.
{\begin{enumerate}

\item An {\it orbifold chart} of $X$   is a triple $(\tilde{U}, G, \varphi)$,
where $\tilde{U}$ is a  connected open subset of ${\mathbb{R}}^n$,
$G$ is a finite group acting  on $\tilde{U}$ and
$\varphi\colon \tilde{U}\to X$ is a $G$-invariant map that induces
a homeomorphism 
$U=\varphi(\tilde{U})\cong\tilde{U}/G$. The subgroup
of $G$ acting trivially on $\tilde{U}$ is denoted by ${\rm ker}(G)$.

\item An {\it embedding} $(\lambda,\theta)\colon (\tilde{U}, G, \varphi)\to
(\tilde{V}, H,\psi)$ between two orbifold charts is  an injective
homomorphism $\theta\colon G\to H$ such that $\theta$ is an
isomorphism from ${\rm ker}(G)$ to ${\rm ker}(H)$, and
an  equivariant embedding
$\lambda\colon \tilde{U}\to\tilde{V}$ with $\psi\circ\lambda=\varphi$.
 
 \item An {\it orbifold atlas} on $X$ is a family ${\cal{U}}=\{ (\tilde{U}, G,\varphi)\}$
of orbifold charts  covering $X$ and satisfying the following: For any two
charts $(\tilde{U},G,\varphi)$ and $(\tilde{V}, H,\psi)$ and for any point $x\in
\varphi(\tilde{U})\cap \psi(\tilde{V})$, there exists a chart $(\tilde{W}, K,\mu)$ 
such that $x\in\mu(\tilde{W})$ and
embeddings $(\tilde{W}, K,\mu)\to (\tilde{U},G,\varphi)$ and
$(\tilde{W}, K,\mu)\to (\tilde{V}, H,\psi)$.

\item An orbifold atlas ${\cal{U}}$ {\it refines} another orbifold atlas
${\cal{V}}$ if every chart in ${\cal{U}}$ admits an embedding into some
chart in ${\cal{V}}$. Two orbifold atlases are called 
 {\it equivalent} if they have a common
refinement.
\end{enumerate}}
\end{definition}

\begin{definition}
\label{orbidef}
An $n$-dimensional  {\it orbifold} is a paracompact Hausdorff space
$X$ equipped with an equivalence class of 
$n$-dimensional orbifold atlases.
\end{definition}

An orbifold is called {\it reduced} if $G$ acts effectively on $\tilde{U}$,
for every orbifold chart $(\tilde{U}, G,\varphi)$.

An orbifold is called a {\it ${\rm C}^r$-orbifold}, $1\leq r\leq\omega$,
where ${\rm C}^\infty$ means
{\it smooth} and ${\rm C}^\omega$ means {\it real analytic}, if
for every orbifold chart $(\tilde{U}, G,\varphi)$, the action of $G$ 
on $\tilde{U}$ is ${\rm C}^r$-differentiable,
and if each 
$\lambda\colon\tilde{U}\to\tilde{V}$ is 
a ${\rm C}^r$-embedding.

We recall the definition of an orbifold map:

\begin{definition}
\label{orbimap}
Let $X$ and $Y$ be ${\rm C}^r$-orbifolds, $1\leq r\leq\omega$.
 Let $0\leq p\leq r$. We call a map
$f\colon X\to Y$ a {\it  ${\rm C}^p$-differentiable orbifold map}, 
if for every $x\in X$, there
are charts $(\tilde{U}, G,\varphi)$ around $x$ and $(\tilde{V}, H,\psi)$
around $f(x)$, such that $f$ maps $U=\varphi(\tilde{U})$ into
$V=\psi(\tilde{V})$ and the restriction $f\vert U$
can be lifted to a ${\rm C}^p$-differentiable
equivariant map
$\tilde{f}\colon\tilde{U}\to \tilde{V}$.
\end{definition}

\begin{definition}
\label{diffeo}
Let $X$ and $Y$ be ${\rm C}^r$-orbifolds, $1\leq r\leq\omega$.
Let $1\leq p\leq r$.
A map $f\colon X\to Y$ is called a {\it  ${\rm C}^p$-diffeomorphism},
if $f$ is a ${\rm C}^p$-differentiable bijection, and if the inverse
map $f^{-1}\colon Y\to X$ is ${\rm C}^p$-differentiable. If there is
a ${\rm C}^p$-diffeomorphism $X\to Y$, then we call $X$ and $Y$
{\it ${\rm C}^p$-diffeomorphic}.
\end{definition}

\begin{prop}
\label{strongdiffeo}
Let $X$ and $Y$ be reduced ${\rm C}^r$-orbifolds, $1\leq r\leq\omega$.
Let $1\leq p\leq r$. Let $f\colon X\to Y$ be a bijection. Then the following are equivalent:
{\begin{enumerate}
\item The bijection $f$ is a ${\rm C}^p$-diffeomorphism.
\item For every $x\in X$, there are orbifold charts $(\tilde{U}, G, \varphi)$
of $X$ and $(\tilde{V},H, \psi)$ of $Y$
satisfying the following conditions:
{\begin{enumerate}
\item $f(U)=V$, where $U=\varphi(\tilde{U})$ and
$V=\psi(\tilde{V})$.

\item $x=\varphi(\tilde{x})$ and $f(x)=\psi(\tilde{y})$, for some
$\tilde{x}\in \tilde{U}$ and $\tilde{y}\in\tilde{V}$, respectively.

\item $G_{\tilde{x}}=G$, $H_{\tilde{y}}=H$.

\item  The restriction $f\vert U\colon U\to V$ has 
an equivariant  lift
$\tilde{f}\colon \tilde{U}\to\tilde{V}$ that is a ${\rm C}^p$-diffeomorphism and the
corresponding homomorphism $G\to H$ is an isomorphism.
\end{enumerate}}
\end{enumerate}}
\end{prop}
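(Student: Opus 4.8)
The plan is to prove the two implications separately, with the substance concentrated in (1) $\Rightarrow$ (2). The direction (2) $\Rightarrow$ (1) is essentially immediate: conditions (a)--(d) in particular provide, around each $x$, charts through which $f$ lifts to a $\mathrm{C}^p$-diffeomorphism $\tilde f$, so $f$ is a $\mathrm{C}^p$-differentiable orbifold map by Definition \ref{orbimap}; and the same charts, with $\tilde f^{-1}$ as the lift and the inverse isomorphism $H\to G$, witness that $f^{-1}$ is $\mathrm{C}^p$-differentiable. Hence $f$ is a $\mathrm{C}^p$-diffeomorphism.

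For (1) $\Rightarrow$ (2), fix $x\in X$ and set $y=f(x)$. First I would choose any chart $(\tilde V',H',\psi')$ around $y$ as supplied by Definition \ref{orbimap} applied to $f$, and any chart around $x$ through which $f\vert$ lifts; shrinking, I may assume there is a chart $(\tilde U',G',\varphi')$ around $x$ with $f(U')\subset V'$ and a $\mathrm{C}^p$ equivariant lift $\tilde f'\colon \tilde U'\to \tilde V'$ of $f\vert U'$. Next, pick $\tilde x\in\tilde U'$ with $\varphi'(\tilde x)=x$. The key normalization step is to pass to charts in which the distinguished preimages $\tilde x,\tilde y$ are \emph{fixed points of the full local groups}, i.e.\ to arrange $G_{\tilde x}=G$ and $H_{\tilde y}=H$ as in (c): this is done by restricting to a small $G_{\tilde x}$-invariant connected neighborhood $\tilde U$ of $\tilde x$ in $\tilde U'$ on which only $G_{\tilde x}$ acts (such neighborhoods exist since $\tilde x$ has finite isotropy and the non-isotropy elements move $\tilde x$ off any sufficiently small ball), and taking $G=G_{\tilde x}$ with the induced chart $(\tilde U,G,\varphi)$; similarly on the target with $\tilde y=\tilde f'(\tilde x)$ and $H=H_{\tilde y}$, further shrinking $\tilde U$ so that $\tilde f'(\tilde U)\subset\tilde V$. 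One must check that these smaller pieces are genuine orbifold charts of $X$ and $Y$ in the given atlases; this follows from the standard fact that each point of a reduced orbifold has arbitrarily small charts of the above form linearized around a fixed point of the local group, together with the compatibility axiom in Definition \ref{orbichartdef}(3).

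The remaining and most delicate point is (d): upgrading the restricted lift $\tilde f:=\tilde f'\vert\tilde U\colon\tilde U\to\tilde V$, which is a priori only an equivariant $\mathrm{C}^p$ \emph{map}, to an equivariant $\mathrm{C}^p$-\emph{diffeomorphism} onto $\tilde V$ (after one more shrinking) whose associated homomorphism $\theta\colon G\to H$ is an \emph{isomorphism}. Here I would use that $f$ is a diffeomorphism: running the same construction for $f^{-1}$ at the point $y$ produces a chart around $y$ and an equivariant $\mathrm{C}^p$ lift $\tilde g$ of $f^{-1}$; by the uniqueness of lifts on reduced orbifold charts (two equivariant lifts of the same map that agree with the chart projections differ by an element of the group action, so in particular near a group-fixed point they can be matched up), the composites $\tilde g\circ\tilde f$ and $\tilde f\circ\tilde g$ agree with the identity on the appropriate overlaps, forcing $\tilde f$ to be a $\mathrm{C}^p$-diffeomorphism onto its image, which after shrinking is all of $\tilde V$. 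Equivariance of $\tilde f$ with trivial kernels (reducedness) then forces the homomorphism $\theta\colon G\to H$ to be injective; applying the same to $\theta'$ associated to $\tilde g$ and using $\theta'\circ\theta=\mathrm{id}_G$, $\theta\circ\theta'=\mathrm{id}_H$ (which follows again from uniqueness of lifts, comparing $\tilde g\circ\tilde f$ to $\mathrm{id}$) shows $\theta$ is an isomorphism. The main obstacle is precisely this matching of the two independently constructed lifts and the bookkeeping of the successive shrinkings so that all four conditions (a)--(d) hold simultaneously on a single pair of charts; the reducedness hypothesis is what makes the lift-uniqueness argument go through and is used essentially here.
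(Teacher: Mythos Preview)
Your proposal is correct and follows essentially the same architecture as the paper's proof: lift both $f$ and $f^{-1}$ near $x$ and $y$, chain the lifts together (with a chart embedding in between), and read off from the composite that $\tilde f$ is an embedding and $\theta$ an isomorphism. Two small differences in execution are worth flagging. First, where you invoke ``uniqueness of lifts'' to conclude that $\tilde g\circ\tilde f$ coincides with the identity up to a group element, the paper instead arranges the lifts into a four-chart chain $\tilde U\xrightarrow{\tilde f}\tilde W\xrightarrow{\lambda}\tilde W'\xrightarrow{\tilde e}\tilde U'$ and cites Lemma~2.2 of Moerdijk--Pronk \cite{MP} to conclude directly that the composite is an embedding; your uniqueness principle is essentially the content of that lemma, so the two justifications are interchangeable. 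Second, for the isomorphism of $\theta$, the paper uses a cardinality argument: the chain $G\hookrightarrow H\hookrightarrow H'\hookrightarrow G'$ of injections together with $G\cong G'$ (uniqueness of the local group at $x$) forces each arrow to be bijective. Your two-sided inverse argument also works, but be aware that $\theta'\circ\theta$ is a priori only an \emph{inner automorphism} of $G$ (conjugation by the group element relating $\tilde g\circ\tilde f$ to $\mathrm{id}$), not literally $\mathrm{id}_G$; this is still enough to force $\theta$ bijective, so the conclusion stands.
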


\begin{proof} 
Clearly, a bijection satisfying Conditions 2 (a) - (d) is a ${\rm C}^p$-diffeomorphism.
Assume then that  $f\colon X\to Y$ is a ${\rm C}^p$-diffeomorphism, and let $x\in X$.
There are orbifold charts $(\tilde{U}, G,\varphi)$ and
$(\tilde{U}', G',\varphi')$ of $X$ around $x$ and $(\tilde{W}, H, \psi)$ and
 $(\tilde{W}', H', \psi')$ of $Y$ around $y=f(x)$, where $\varphi(\tilde{U})=U$,
 $\varphi'(\tilde{U'})=U'$, $\psi(\tilde{W})=W$ and 
 $\psi'(\tilde{W}')=W'$ making the following diagram
 commute:
 
$$ 
 \begin{CD}
\tilde{U} @>\tilde{f}>>\tilde{W} @>\lambda>> \tilde{W}'
@ >\tilde{e}>> \tilde{U}'\\
@VV\varphi V @VV\psi V @VV\psi' V @VV\varphi' V\\
U @>f>> W @>i>> W' @> f^{-1}> >U'
\end{CD}
$$ 
\medskip

\noindent Here $\tilde{f}$ and $\tilde{e}$ are equivariant ${\rm C}^p$-differentiable
lifts of the restrictions $f\vert U$ and $f^{-1}\vert W'$, respectively,
$\lambda\colon \tilde{W}\to\tilde{W}'$ is an equivariant
${\rm C}^r$-embedding and $i\colon W\to W'$ is the inclusion.
We may choose the orbifold charts in such a way that $G=G_{\tilde{x}}$,
for some $\tilde{x}\in\tilde{U}$, where $\varphi(\tilde{x})=x$, and
$H=H_{\tilde{y}}$, for some $\tilde{y}\in\tilde{W}$, where $\psi(\tilde{y})=y$.
Similarly, $G'=G'_{\tilde{x}'}$ for some $\tilde{x}'\in\tilde{U}'$, where
$\varphi'(\tilde{x}')=x$ and $H'=H'_{\tilde{y}'}$ for some 
$\tilde{y}'\in\tilde{W}'$, where $\psi'(\tilde{y}')=y$. 
Since the local groups are unique up to an isomorphism,
it follows that $G'\cong G$ and $H'\cong H$.
According to Lemma 2.2 in \cite{MP}, the composed
map $\tilde{e}\circ \lambda\circ \tilde{f}$ must be an embedding.
It follows that $\tilde{f}$ is an injection and has maximal rank at
every point. Therefore, $\tilde{f}$ is an embedding and $\tilde{f}(
\tilde{U})$ is open in $\tilde{W}$.

Let $\theta\colon  G\to H$ 
be the homomorphism associated with the
lift $\tilde{f}$. Let $g_1, g_2\in G$ and assume $\theta(g_1)=
\theta (g_2)$. Then $\theta(g_1^{-1}g_2)=e$ and 
$\tilde{f}(g_1^{-1}g_2z)=\theta(g_1^{-1}g_2)\tilde{f}(z)=
\tilde{f}(z)$, for all $z\in \tilde{U}$.  Since $\tilde{f}$ is an
injection, it follows that $g_1^{-1}g_2z=z$ for all $z\in \tilde{U}$.
Since $G$ acts effectively on $\tilde{U}$, it follows that
$g_1=g_2$. Therefore, $\theta$ is an injection. Similarly, we can see
that the homomorphism associated with the lift $\tilde{e}$ is injective.
By assumption, the homomorphism associated with $\lambda$ is injective. Since
$G\cong G'$, it follows that $\theta$ 
is an isomorphism. Thus the charts $(\tilde{U}, G,\varphi)$ 
and $(\tilde{V}, H, \psi\vert)$, where
$\tilde{V}=\tilde{f}(\tilde{U})$, satisfy the Conditions $2$ (a), (b), (c) and (d).
\end{proof}

Notice that Proposition \ref{strongdiffeo} does not hold without the assumption that
$X$ and $Y$ are reduced.
The remark on p. 2372 in \cite{Ka2} gives an example of an orbifold diffeomorphism that 
fails to satisfy Condition 2 (d) of Proposition \ref{strongdiffeo}.

Let $X$, $Y$ and $f$ be as in Proposition \ref{strongdiffeo}.
Denote the ${\rm C}^r$-differential structure on $X$ by $\alpha$ and the
${\rm C}^r$-differential structure on $Y$ by $\beta$.  Let
$\alpha_0$ be the collection of the charts $(\tilde{U}, G,\varphi)$ in
$\alpha$ having the property that there is a chart $(\tilde{V}, H, \psi)$ in $\beta$, where
$(\tilde{U}, G, \varphi)$ and $(\tilde{V}, H, \psi)$ satisfy Conditions
2 (a) - (d) of  Proposition \ref{strongdiffeo}.
Thus, for every chart in $\alpha_0$ we associate a chart in
$\beta$. We denote by $f(\alpha_0)$ the collection of charts in
$\beta$ obtained in this way.

\begin{theorem}
\label{samat3}
Let $X$ and $Y$ be reduced  ${\rm C}^r$-orbifolds, $1\leq r\leq\omega$, and let
$f\colon X\to Y$ be a  ${\rm C}^r$-diffeomorphism. Let $\alpha$ and
$\beta$ be the ${\rm C}^r$-differential structures on $X$ and $Y$, respectively.
Then:
{\begin{enumerate}
\item The collection $\alpha_0$ is a ${\rm C}^r$-atlas on $X$ refining $\alpha$.
\item The collection $f(\alpha_0)$ is a ${\rm C}^r$-atlas on $Y$ refining $\beta$.
\end{enumerate}}
\end{theorem}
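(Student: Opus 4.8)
The plan is to show that $\alpha_0$ and $f(\alpha_0)$ are genuine atlases, and since refinement of $\alpha$ (resp.\ $\beta$) is essentially built into the construction, the real content is the covering property and the compatibility (embedding) condition in Definition \ref{orbichartdef}(3). First I would verify that $\alpha_0$ covers $X$: given $x\in X$, Proposition \ref{strongdiffeo} produces charts $(\tilde U,G,\varphi)$ around $x$ and $(\tilde V,H,\psi)$ around $f(x)$ satisfying Conditions 2(a)--(d); by construction $(\tilde U,G,\varphi)\in\alpha_0$ and $x\in\varphi(\tilde U)$. Since every chart of $\alpha_0$ already lies in $\alpha$, it trivially admits an embedding into a chart of $\alpha$ (the identity), so $\alpha_0$ refines $\alpha$ once we know it is an atlas. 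The same reasoning, applied to the bijection $f$ and using that $f(\alpha_0)\subset\beta$, will handle the covering and refinement assertions for $f(\alpha_0)$, noting that $f(\varphi(\tilde U))=\psi(\tilde V)$ by Condition 2(a), so the images of the associated charts cover $f(X)=Y$.

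The substantive step is checking the atlas compatibility condition for $\alpha_0$: given two charts $(\tilde U_1,G_1,\varphi_1),(\tilde U_2,G_2,\varphi_2)\in\alpha_0$ and a point $x\in\varphi_1(\tilde U_1)\cap\varphi_2(\tilde U_2)$, I must produce a chart $(\tilde W,K,\mu)$ with $x\in\mu(\tilde W)$ admitting embeddings into both. Since $\alpha_0\subset\alpha$ and $\alpha$ is an atlas, there is such a chart $(\tilde W,K,\mu)\in\alpha$; the point is to arrange $(\tilde W,K,\mu)\in\alpha_0$, i.e.\ to find a $\beta$-chart around $f(x)$ satisfying Conditions 2(a)--(d) of Proposition \ref{strongdiffeo} relative to $(\tilde W,K,\mu)$. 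The clean way is to apply Proposition \ref{strongdiffeo} directly to the pair $(f,x)$ but shrinking: Proposition \ref{strongdiffeo} already guarantees a chart $(\tilde W,K,\mu)$ around $x$ with an equivariant $\mathrm C^r$-diffeomorphic lift onto a $\beta$-chart; after possibly further shrinking $\tilde W$ so that $\mu(\tilde W)\subset\varphi_1(\tilde U_1)\cap\varphi_2(\tilde U_2)$, the restriction is still a chart of $\alpha$ lying in $\alpha_0$, and the existence of the embeddings into $(\tilde U_i,G_i,\varphi_i)$ follows because $\alpha$ is an atlas and because the local-group isomorphism conditions propagate through the embeddings (using effectiveness, exactly as in the proof of Proposition \ref{strongdiffeo}).

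For $f(\alpha_0)$ I would transport this under $f$: if $(\tilde V_1,H_1,\psi_1),(\tilde V_2,H_2,\psi_2)\in f(\alpha_0)$ are the charts associated to $(\tilde U_1,G_1,\varphi_1),(\tilde U_2,G_2,\varphi_2)\in\alpha_0$, and $y\in\psi_1(\tilde V_1)\cap\psi_2(\tilde V_2)$, then $x=f^{-1}(y)$ lies in $\varphi_1(\tilde U_1)\cap\varphi_2(\tilde U_2)$ by Condition 2(a). Apply the previous paragraph to get a compatible chart $(\tilde W,K,\mu)\in\alpha_0$ with $x\in\mu(\tilde W)$ and embeddings into the two $\alpha_0$-charts; the associated $\beta$-chart $(\tilde W',K',\mu')\in f(\alpha_0)$ then satisfies $y\in\mu'(\tilde W')$, and composing the given embeddings with the $\mathrm C^r$-diffeomorphic equivariant lifts of $f$ (and invoking Lemma 2.2 of \cite{MP} as in Proposition \ref{strongdiffeo} to see the composites are embeddings) yields embeddings of $(\tilde W',K',\mu')$ into $(\tilde V_i,H_i,\psi_i)$. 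Finally, $f^{-1}$ is also a $\mathrm C^r$-diffeomorphism, so $f(\alpha_0)\subset\beta$ and the charts of $\beta$ cover $Y$, giving the refinement statement.

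I expect the main obstacle to be the bookkeeping around shrinking charts while preserving Conditions 2(c)--(d): one must ensure that after restricting $\tilde W$ to a smaller $G$-invariant connected open set over $x$ the local isotropy group is still the full group $K$ and the restricted lift of $f$ is still an equivariant $\mathrm C^r$-diffeomorphism onto its (open) image, which is then a legitimate $\beta$-chart. This is where the effectiveness hypothesis and the injectivity-of-associated-homomorphism argument from the proof of Proposition \ref{strongdiffeo} get reused; none of it is deep, but it has to be stated carefully so that "$(\tilde W,K,\mu)\in\alpha_0$" is actually justified rather than merely "$(\tilde W,K,\mu)\in\alpha$ and looks like it should be in $\alpha_0$."
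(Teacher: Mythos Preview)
Your plan is sound and leads to a correct proof, and it is close in spirit to the paper's argument; the difference is mainly in how the common refinement is produced. There is, however, one imprecision you should tighten. After invoking Proposition~\ref{strongdiffeo} to get $(\tilde W,K,\mu)\in\alpha_0$ and then shrinking so that $\mu(\tilde W)\subset\varphi_1(\tilde U_1)\cap\varphi_2(\tilde U_2)$, you assert that embeddings $(\tilde W,K,\mu)\to(\tilde U_i,G_i,\varphi_i)$ exist ``because $\alpha$ is an atlas.'' The atlas axiom in Definition~\ref{orbichartdef}(3) does not say that containment of images yields an embedding; it only produces a \emph{third} chart around $x$ embedding into both. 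So you must pass to a further chart $(\tilde W',K',\mu')\in\alpha$ (with $K'_{\tilde x'}=K'$) that embeds into your $(\tilde W,K,\mu)$ and into each $(\tilde U_i,G_i,\varphi_i)$, and then check that $(\tilde W',K',\mu')\in\alpha_0$. That last check is easy---compose the embedding $\tilde W'\hookrightarrow\tilde W$ with the equivariant ${\rm C}^r$-diffeomorphism $\tilde f$ supplied by Proposition~\ref{strongdiffeo}---but it should be said, since this is precisely the ``bookkeeping'' you flag at the end.

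For comparison, the paper avoids the second appeal to Proposition~\ref{strongdiffeo} altogether. It first takes a chart $(\tilde U,G,\varphi)\in\alpha$ with embeddings $\lambda_i\colon\tilde U\to\tilde U_i$ from the $\alpha$-atlas axiom, pushes forward via $\tilde f_i\circ\lambda_i$ to obtain two $\beta$-charts around $f(x)$, applies the atlas axiom for $\beta$ on the $Y$ side to get a common $\beta$-refinement $(\tilde V,G,\psi)$ with embeddings $\mu_i$, and finally pulls back along $\lambda_1^{-1}\circ\tilde f_1^{-1}\circ\mu_1$ to land in $\alpha_0$. The advantage of this route is that it manufactures the $\alpha_0$-chart and the corresponding $f(\alpha_0)$-chart, together with their embeddings into the $(\tilde U_i,G_i,\varphi_i)$ and $(\tilde V_i,G_i,\psi_i)$, in one stroke; your transport-by-$f$ paragraph for $f(\alpha_0)$ then becomes unnecessary. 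Your approach, by contrast, keeps the argument entirely on the $X$ side and treats $f(\alpha_0)$ by symmetry, which is perfectly legitimate but slightly longer. Either way, once the imprecision above is fixed, the two arguments are interchangeable.
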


\begin{proof}
Let $(\tilde{U}_i, G_i, \varphi_i)\in \alpha_0$ for $i=1,2$, and let $x\in\varphi_1(\tilde{U}_1)\cap
\varphi_2(\tilde{U}_2)$. Let $(\tilde{V}_i, G_i, \psi_i)\in f(\alpha_0)$, $i=1,2$, be the corresponding charts in $\beta$. Let $\tilde{x}_i\in \tilde{U}_i$ and $\tilde{y}_i\in \tilde{V}_i$ be such that
$G_i=G_{\tilde{x}_i}=G_{\tilde{y}_i}$, for $i=1,2$, where $\tilde{y}_i=\tilde{f}_i(\tilde{x}_i)$ and
$\tilde{f}_i\colon \tilde{U}_i\to \tilde{V}_i$ is an equivariant lift of $f\vert\colon \varphi_i(
\tilde{U}_i)\to \psi_i(\tilde{V}_i)$ as in Condition 2 (d) of Proposition \ref{strongdiffeo}.

There is a chart $(\tilde{U},G,\varphi)\in
\alpha$ such that $x\in \varphi(\tilde{U})$ and embeddings $\lambda_i\colon
(\tilde{U}, G,\varphi)\to (\tilde{U}_i,G_i,\varphi_i)$, for $i=1,2$. 
Let $\tilde{x}\in\tilde{U}$ be such that $\varphi(\tilde{x})=x$. We may assume that
$G_{\tilde{x}}=G$.
Let $\theta_i\colon
G\to G_i$ denote the  injective homomorphisms
associated with the embeddings $\lambda_i$. Then
$\tilde{f}_i\circ\lambda_i\colon \tilde{U}\to\tilde{V}_i$, is a $\theta_i$-equivariant embedding
and $(\tilde{f}_i(\lambda_i(\tilde{U})), G, \psi_i\vert)$ is an orbifold chart of $Y$, for
$i=1,2$. Now, $f(x)=f(\varphi(\tilde{x}))\in \psi_1(\tilde{f}_1(\lambda_1(\tilde{U})))\cap
 \psi_2(\tilde{f}_2(\lambda_2(\tilde{U})))$. Let $\tilde{z}_i=\tilde{f}_i(\lambda_i(\tilde{x}))$,
 for $i=1,2$. There is a chart $(\tilde{V}, G,\psi)$ of $Y$ 
 such that $f(x)\in \psi(\tilde{V})$,  with embeddings
 $\mu_i\colon (\tilde{V},G,\psi)\to (\tilde{f}_i(\lambda_i(\tilde{U})), G, \psi_i\vert)$.
 Let $\tilde{z}\in\tilde{V}$ be such that  $\mu_i(\tilde{z})=
 \tilde{z}_i$. Then $G_{\tilde{z}}=G$. The charts 
 $(\lambda^{-1} (\tilde{f}_1^{-1}(\mu_1(\tilde{V}))), G, \varphi\vert)$
 and $(\tilde{V}, G,\psi)$, of $X$ and $Y$, respectively, satisfy the Conditions 2 (a) - (d) of
 Proposition \ref{strongdiffeo}. Consequently, 
 $(\lambda^{-1} (\tilde{f}_1^{-1}(\mu_1(\tilde{V}))), G, \varphi\vert)\in\alpha_0$
 and $(\tilde{V}, G,\psi)\in f(\alpha_0)$.
\end{proof}

\section{The Proofs}
\label{redcase}

\noindent We first recall some well-known
facts having to do with {\it quotient orbifolds}, for more details see
\cite{Ka2}, Section 3.
Let $G$ be a Lie group and let $M$  and $N$ be  real analytic manifolds.
Assume $G$ acts
on $M$  and $N$, respectively,  by  proper,  effective, almost free, real analytic  actions. 
(An action of $G$ on $M$ is proper if the map $G\times M\to M\times M$,
$(g,x)\mapsto (x,gx)$, is proper. It is almost free if all the isotropy subgroups are finite.)
Then the orbit space $M/G$ is a reduced real analytic orbifold.
The orbifold charts of $M/G$ are the triples $({\rm N}_x, G_x, \pi_x)$,
where $x\in M$, ${\rm N}_x$ is a {\it linear slice} at $x$, $G_x$ is the isotropy
subgroup at $x$ and $\pi_x\colon {\rm N}_x\to {\rm N}_x/G_x\cong
(G{\rm N}_x)/G$ is the natural projection. 
Every
$G$-equivariant ${\rm C}^r$-diffeomorphism
$f\colon M\to N$, $1\leq r\leq\omega$,  induces 
a ${\rm C}^r$-diffeomorphism $\tilde{f}\colon
M/G\to N/G$.

We next recall the definition and some properties of the {\it frame bundle} of a
reduced real analytic orbifold. For proofs and details, see \cite{MM}, pp. 42--43.
In \cite{MM}, the frame bundle is constructed for reduced smooth
orbifolds, but the same construction goes through in the
real analytic case. 

Let $X$ be a reduced real analytic orbifold of dimension $n$ and let
$$
{\mathcal{U}}=\{ (\tilde{U}_i, G_i, \varphi_i)\}_{i\in I}
$$
be the maximal orbifold atlas of $X$.
We first construct the frame bundle
${\rm Fr}(\tilde{U}_i)$ for every chart $\tilde{U}_i$.
The action of $G_i$ on $\tilde{U}_i$ lifts to a
left action on ${\rm Fr}(\tilde{U}_i)$: $g(x,B)=(gx,(dg)_x\circ B)$,
for every $(x,B)\in {\rm Fr}(\tilde{U}_i)$. This action is free and it commutes with
the right action of  the general linear group
${\rm GL}_n(\mathbb{R})$. In particular, ${\rm Fr}(
\tilde{U}_i)/G_i$ is a real analytic manifold on which ${\rm GL}_n({\mathbb{R}})$
acts from the right by a proper, effective,
 almost free, real analytic action, and we may identify
${\rm Fr}(\tilde{U}_i)/G_i$ with the twisted product 
$\tilde{U}_i\times_{G_i}{\rm GL}_n({\mathbb{R}})$.
Let
$p_i\colon {\rm Fr}(\tilde{U}_i)/G_i\to \tilde{U}_i/G_i$ denote the
natural projection.

Let $\lambda\colon (\tilde{U}_i, G_i, \varphi_i)\to (\tilde{U}_j,G_j,\varphi_j)$
be a real analytic embedding between orbifold charts. Let $\theta\colon
G_i\to G_j$ be the homomorphism associated with $\lambda$.
Then  $\lambda$, together with the
differential $d\lambda$, induces an embedding 
$\tilde{\lambda}=(\lambda, d\lambda)\colon
{\rm Fr}(\tilde{U}_i)\to {\rm Fr}(\tilde{U}_j)$. 
Since $\lambda$ is $\theta$-equivariant, it follows that
this embedding factors as
$$
\lambda_{\ast}\colon {\rm Fr}(\tilde{U}_i)/G_i\to
{\rm Fr}(\tilde{U}_j)/G_j.
$$
The map
$\lambda_{\ast}$ is a real analytic open embedding,
it commutes with the action of
${\rm GL}_n({\mathbb{R}})$ and $p_j\circ \lambda_{\ast}=p_i$. 

The manifolds ${\rm Fr}(\tilde{U}_i)/G_i$, for all $i\in I$,  together with  
the real analytic embeddings $\lambda_{\ast}$ induced by 
all the embeddings $\lambda$ between the orbifold charts, form a
filtered direct system. The {\it frame bundle} ${\rm Fr}(X)$ 
of the real analytic orbifold $X$ is defined to
be the colimit of this system,
$$
{\rm Fr}(X)=\varinjlim\{ {\rm Fr}(\tilde{U}_i)/G_i,\lambda_{\ast}\}.
$$
Then ${\rm Fr}(X)$ is a real analytic manifold, each
${\rm Fr}(\tilde{U}_i)/G_i$ is canonically embedded into
${\rm Fr}(X)$ as an open submanifold and  the maps $p_i$ induce
an open map $p\colon {\rm Fr}(X)\to X$. The following theorem lists some of the 
properties of ${\rm Fr}(X)$:

\begin{theorem}
\label{frame}
Let $X$ be a reduced real analytic orbifold of
dimension $n$ and let
${\rm Fr}(X)$ be the frame bundle of $X$. Then 
${\rm Fr}(X)$ is a real analytic manifold. The general
linear group ${\rm GL}_n({\mathbb{R}})$ acts on
${\rm Fr}(X)$ by a proper,   effective,
almost free, real analytic action. The quotient orbifold ${\rm Fr}(X)/
{\rm GL}_n({\mathbb{R}})$ is real analytically 
diffeomorphic to $X$.
\end{theorem}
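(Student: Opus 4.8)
The plan is to carry out the colimit construction chart by chart and verify that each assertion can be read off the local models, following the smooth treatment in \cite{MM} and noting that nothing in it is sensitive to the differentiability class. Fix a chart $(\tilde U_i,G_i,\varphi_i)$. Since $\tilde U_i\subseteq\mathbb{R}^n$ is open we may trivialize its tangent bundle and write ${\rm Fr}(\tilde U_i)=\tilde U_i\times{\rm GL}_n(\mathbb{R})$, with lifted $G_i$-action $g\cdot(x,B)=(gx,(dg)_xB)$, a real analytic action. The one place where the hypothesis that $X$ is \emph{reduced} enters is that this lifted action is \emph{free}: if $g\cdot(x,B)=(x,B)$ then $gx=x$ and $(dg)_x=\mathrm{id}$; since $g$ has finite order, comparison of Taylor expansions at $x$ forces $g$ to be the identity near $x$, hence on the connected set $\tilde U_i$ by analytic continuation, hence $g=e$ as $G_i$ acts effectively. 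Therefore ${\rm Fr}(\tilde U_i)/G_i$ is a real analytic manifold --- the twisted product $\tilde U_i\times_{G_i}{\rm GL}_n(\mathbb{R})$ --- carrying the real analytic right ${\rm GL}_n(\mathbb{R})$-action by multiplication on the second factor and the projection $p_i$ to $\tilde U_i/G_i\subseteq X$.

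The system $\{{\rm Fr}(\tilde U_i)/G_i,\lambda_*\}$ is filtered because the orbifold atlas is; its bonding maps are real analytic open embeddings commuting with the ${\rm GL}_n(\mathbb{R})$-actions and with the projections. Hence, in the colimit ${\rm Fr}(X)$, each ${\rm Fr}(\tilde U_i)/G_i$ is an invariant open subset, the local actions glue to a real analytic ${\rm GL}_n(\mathbb{R})$-action, and the $p_i$ glue to an open invariant map $p\colon{\rm Fr}(X)\to X$. That ${\rm Fr}(X)$ is an honest real analytic manifold amounts to checking it is Hausdorff and paracompact; as in \cite{MM} this reduces to verifying that $p^{-1}(\varphi_i(\tilde U_i))$ is exactly the image of ${\rm Fr}(\tilde U_i)/G_i$ --- which uses that a common refining chart of two overlapping charts already refines their frame bundle pieces --- so that Hausdorffness is inherited from $X$, and likewise paracompactness. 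Effectiveness and almost freeness of the ${\rm GL}_n(\mathbb{R})$-action are local: the isotropy group at $[(x,B)]$ in the model is $\{B^{-1}(dg)_xB : g\in(G_i)_x\}$, which is finite, and it is trivial on the dense open set of points with trivial $G_i$-stabilizer.

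The one step I expect to take real work is \emph{properness}. I would deduce it by reducing to a compact structure group: choose a Riemannian metric on the orbifold $X$ by a partition of unity, form the associated orthonormal frame bundle ${\rm Fr}^{O}(X)$ --- a (smooth) manifold on which $O(n)$ acts freely, hence properly, with ${\rm Fr}^{O}(X)/O(n)$ identified with $X$ --- and observe, via fibrewise Gram--Schmidt, that ${\rm Fr}(X)$ is ${\rm GL}_n(\mathbb{R})$-equivariantly diffeomorphic to the balanced product ${\rm Fr}^{O}(X)\times_{O(n)}{\rm GL}_n(\mathbb{R})$. Because $O(n)$ is compact, such an induced space is always a proper ${\rm GL}_n(\mathbb{R})$-space; properness being a topological matter, it passes to ${\rm Fr}(X)$. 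Together with the previous paragraph this gives the first two assertions, and shows, by the quotient-orbifold facts recalled at the start of this section, that ${\rm Fr}(X)/{\rm GL}_n(\mathbb{R})$ is a reduced real analytic orbifold.

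It remains to identify ${\rm Fr}(X)/{\rm GL}_n(\mathbb{R})$ with $X$. The map $p$ is ${\rm GL}_n(\mathbb{R})$-invariant, and over a point $[x]\in\tilde U_i/G_i$ its fibre is the single orbit $\{[(x,B)] : B\in{\rm GL}_n(\mathbb{R})\}$, so $p$ induces a continuous bijection $\bar p\colon{\rm Fr}(X)/{\rm GL}_n(\mathbb{R})\to X$. To see it is a real analytic orbifold diffeomorphism one compares charts: at a point $\xi$ over $[x]$ the ${\rm GL}_n(\mathbb{R})$-isotropy is isomorphic to $(G_i)_x$, and a linear slice for the ${\rm GL}_n(\mathbb{R})$-action at $\xi$ is real analytically isomorphic, equivariantly for this isotropy, to a linearized slice ${\rm N}'_x$ for the $(G_i)_x$-action on $\tilde U_i$ carrying its isotropy representation; under this isomorphism the slice chart of ${\rm Fr}(X)/{\rm GL}_n(\mathbb{R})$ at the image of $\xi$ becomes the subchart $({\rm N}'_x,(G_i)_x,\varphi_i|)$ of $X$ and $\bar p$ becomes the identity. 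Among all of this, the Hausdorffness of the colimit and, above all, the properness of the ${\rm GL}_n(\mathbb{R})$-action are the only points requiring genuine care; everything else is local bookkeeping on the model $\tilde U_i\times_{G_i}{\rm GL}_n(\mathbb{R})$, and the whole argument runs exactly as the smooth one of \cite{MM}.
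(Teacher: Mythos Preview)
Your argument is correct, and for everything except properness it simply unpacks what the paper, like \cite{MM}, leaves implicit. The genuine divergence is in how you establish that the ${\rm GL}_n(\mathbb{R})$-action on ${\rm Fr}(X)$ is proper. The paper does not pass to a compact structure group; instead it observes that the action is \emph{Cartan} in the sense of Palais \cite{Pa} (each point has a neighborhood $V$ with $\{g:gV\cap V\neq\varnothing\}$ relatively compact, which is immediate from the local model since the isotropy groups are finite) and that the orbit space ${\rm Fr}(X)/{\rm GL}_n(\mathbb{R})\cong X$ is regular; Proposition~1.2.5 of \cite{Pa} then yields properness in one line. Your route---choosing a Riemannian metric, forming the orthonormal frame bundle, and writing ${\rm Fr}(X)\cong{\rm Fr}^{O}(X)\times_{O(n)}{\rm GL}_n(\mathbb{R})$ via Gram--Schmidt, so that properness is inherited from the compactness of $O(n)$---is also valid, since properness is purely topological and the smooth metric suffices. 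The Palais criterion is shorter and avoids the auxiliary metric and the induced-space machinery, at the cost of invoking an external result; your argument is more self-contained and makes the geometry explicit, but is longer and mixes smooth and real analytic structures along the way.
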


\begin{proof}
The other parts of the claim except properness are explained in 
\cite{MM}, pp. 42 - 43, in the case of smooth orbifolds. 
The real analytic case is similar. Since the action of
${\rm GL}_n({\mathbb{R}})$ on ${\rm Fr}(X)$ is obviously Cartan 
(Definition 1.1.2 in \cite{Pa}) and since ${\rm Fr}(X)/{\rm GL}_n({\mathbb{R}})$ 
is regular, it follows from Proposition 1.2.5 in \cite{Pa} that the
action is proper.
\end{proof}

\begin{lemma} 
\label{redapu}
Let $X$ and $Y$ be $n$-dimensional
reduced real analytic orbifolds, and
let $f\colon X\to Y$ be a ${\rm C}^r$-diffeomorphism, $2\leq r\leq\omega$.
Then $f$ induces a ${\rm GL}_n({\mathbb{R}})$-equivariant
${\rm C}^{r-1}$-diffeomorphism $\hat{f}\colon {\rm Fr}(X)\to {\rm Fr}(Y)$, and the diagram
$$
\begin{CD}
{\rm Fr}(X) @>\hat{f}>> {\rm Fr}(Y)\\
@VVp_XV  @VVp_YV\\
X @>f>> Y
\end{CD}
$$
commutes.
\end{lemma}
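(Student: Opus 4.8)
The plan is to construct $\hat f$ chart by chart using the functoriality of the frame bundle construction already set up in the excerpt, and then check that the local pieces agree on overlaps so that they glue to a global map, using Proposition \ref{strongdiffeo} to control the local behavior of $f$.

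First I would invoke Proposition \ref{strongdiffeo}: since $X$ and $Y$ are reduced and $f$ is a ${\rm C}^r$-diffeomorphism (in particular ${\rm C}^2$, so $p=2\le r$ applies, and in fact we use $p=r$), for every $x\in X$ there are charts $(\tilde U,G,\varphi)$ of $X$ and $(\tilde V,H,\psi)$ of $Y$ with $f(\varphi(\tilde U))=\psi(\tilde V)$ and an equivariant lift $\tilde f\colon\tilde U\to\tilde V$ that is a ${\rm C}^r$-diffeomorphism whose associated homomorphism $\theta\colon G\to H$ is an isomorphism. As in the frame bundle construction recalled above, $\tilde f$ together with its differential $d\tilde f$ gives a real analytic-style diffeomorphism $(\tilde f,d\tilde f)\colon{\rm Fr}(\tilde U)\to{\rm Fr}(\tilde V)$; here $\tilde f$ is only ${\rm C}^r$, so $d\tilde f$ is ${\rm C}^{r-1}$ and hence $(\tilde f,d\tilde f)$ is a ${\rm C}^{r-1}$-diffeomorphism. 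Because $\tilde f$ is $\theta$-equivariant with $\theta$ an isomorphism, exactly as in the passage to $\lambda_\ast$ this descends to a ${\rm C}^{r-1}$-diffeomorphism $\tilde f_\ast\colon{\rm Fr}(\tilde U)/G\to{\rm Fr}(\tilde V)/H$ which commutes with the right ${\rm GL}_n(\mathbb R)$-action and satisfies $p_Y\circ\tilde f_\ast=\tilde f\circ p_X$ on the relevant open pieces. Since ${\rm Fr}(\tilde U)/G$ and ${\rm Fr}(\tilde V)/H$ are canonically open submanifolds of ${\rm Fr}(X)$ and ${\rm Fr}(Y)$, this defines $\hat f$ on the open set ${\rm Fr}(\tilde U)/G\subset{\rm Fr}(X)$; these open sets cover ${\rm Fr}(X)$ as $x$ ranges over $X$.

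Next I would verify that the locally defined maps are independent of the choices and agree on overlaps, so they patch to a single ${\rm GL}_n(\mathbb R)$-equivariant ${\rm C}^{r-1}$-diffeomorphism $\hat f\colon{\rm Fr}(X)\to{\rm Fr}(Y)$. The key point is that on a point $(x,B)\in{\rm Fr}(\tilde U_i)$, the lift $\tilde f_i$ sends it to $(\tilde f_i(x),(d\tilde f_i)_x\circ B)$, and two lifts of $f$ over a common smaller chart differ (after inserting embeddings $\lambda$ between charts of $X$ and $\mu$ between charts of $Y$, as in the proof of Theorem \ref{samat3}) by the action of group elements and by the embedding maps, all of which are intertwined compatibly by the differentials; concretely, on a common refinement chart $(\tilde W,K,\mu_X)\hookrightarrow(\tilde U_i,G_i,\varphi_i)$ one has $\tilde f_j\circ\lambda_{ji}$ and $\tilde f_i\circ\lambda$ differing by an element of $G_j$ (by the uniqueness-of-lifts statement for reduced orbifolds), and applying the differential functor turns this into a statement about agreement after descending to ${\rm Fr}(\tilde W)/K$. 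Hence the induced maps on ${\rm Fr}(\cdot)/(\cdot)$ agree on overlaps, and the colimit description ${\rm Fr}(X)=\varinjlim{\rm Fr}(\tilde U_i)/G_i$ yields a well-defined $\hat f$. That $\hat f$ is ${\rm GL}_n(\mathbb R)$-equivariant and a ${\rm C}^{r-1}$-diffeomorphism follows because each $\tilde f_\ast$ is, and that the square commutes is immediate from $p_Y\circ\tilde f_\ast=\tilde f\circ p_X$ locally together with $p_X\circ({\rm Fr}(\tilde U_i)/G_i\hookrightarrow{\rm Fr}(X))$ being the structure map; applying the same construction to $f^{-1}$ gives the inverse of $\hat f$.

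The main obstacle I expect is the gluing/well-definedness step: one must be careful that the local lifts $\tilde f_i$ furnished by Proposition \ref{strongdiffeo} are not canonical (they depend on the chart choices and are only determined up to the group actions and up to pre/post-composition with chart embeddings), so showing that the induced maps on the frame bundle quotients genuinely agree on overlaps requires the orbifold-map-lifting uniqueness for reduced orbifolds (the analogue of Lemma 2.2 in \cite{MP} / the argument in the proof of Theorem \ref{samat3}) applied one derivative down, i.e.\ to the pair $(\tilde f_i,d\tilde f_i)$. The loss of one degree of differentiability is automatic and harmless: ${\rm Fr}(X)$ and ${\rm Fr}(Y)$ are real analytic manifolds, a ${\rm C}^r$ map has ${\rm C}^{r-1}$ differential, and $\hat f$ is genuinely a diffeomorphism since the same construction applied to $f^{-1}$ produces a ${\rm C}^{r-1}$ inverse.
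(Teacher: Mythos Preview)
Your proposal is correct and follows essentially the same approach as the paper: construct local ${\rm C}^{r-1}$-diffeomorphisms on the frame-bundle pieces via the lifts $\tilde f_i$ and their differentials, then verify compatibility on overlaps using the uniqueness of embeddings between charts for reduced orbifolds. The paper streamlines the setup by first invoking Theorem~\ref{samat3} to fix once and for all the refinement atlases $\alpha_0$ and $f(\alpha_0)$, and for the gluing step it cites Proposition~A.1 in \cite{MP} (rather than Lemma~2.2) to conclude that $\tilde f_j\circ\lambda = g\circ\mu\circ\tilde f_i$ for some $g\in G_i$, which is exactly the relation you need.
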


\begin{proof}
Let $\alpha$ and $\beta$ be the real analytic differential
structures of $X$ and $Y$, respectively. By  Theorem
\ref{samat3}, $\alpha$ and $\beta$ have
refinements $\alpha_0$ and $f(\alpha_0)$, respectively, with
the property that
for every chart $(\tilde{U}_i,G_i,\varphi_i)$ in $\alpha_0$ there is a
chart $(\tilde{V}_i, G_i,\psi_i)$ in $f(\alpha_0)$ such that the
charts $(\tilde{U}_i,G_i,\varphi_i)$  and $(\tilde{V}_i, G_i,\psi_i)$
satisfy the Conditions 2 (a) - (d) of Proposition \ref{strongdiffeo}. Denote the restrictions of
$f$ to $\varphi_i(\tilde{U}_i)$ by $f_i\colon \varphi_i(\tilde{U}_i)\to
\psi_i(\tilde{V}_i)$. Then $f_i$ has a lift $\tilde{f}_i\colon
\tilde{U}_i\to\tilde{V}_i$, and  $\tilde{f}_i$ is
a $G_i$-equivariant  ${\rm C}^r$-diffeomorphism. The ${\rm C}^{r-1}$-maps
$(\tilde{f}_i, d\tilde{f}_i)\colon \tilde{U}_i\times {\rm GL}_n(
{\mathbb{R}})\to \tilde{V}_i\times {\rm GL}_n({\mathbb{R}})$
induce ${\rm GL}_n({\mathbb{R}})$-equivariant ${\rm C}^{r-1}$-diffeomorphisms
$$
\bar{f}_i\colon {\rm Fr}(\tilde{U}_i)/G_i\cong
\tilde{U}_i\times_{G_i}{\rm GL}_n({\mathbb{R}})\to
\tilde{V}_i\times_{G_i}{\rm GL}_n({\mathbb{R}})
\cong {\rm Fr}(\tilde{V}_i)/G_i.
$$
Let $\lambda\colon\tilde{U}_i\to\tilde{U}_j$ and
$\mu\colon \tilde{V}_i\to\tilde{V}_j$ be embeddings. Then
$\tilde{f}_j^{-1}\circ \mu\circ\tilde{f}_i\colon \tilde{U}_i\to\tilde{U}_j$
is also an embedding. By Proposition A.1 in \cite{MP}, there is 
a unique $g\in G_i$
such that $\lambda=g\circ \tilde{f}_j^{-1}\circ \mu\circ\tilde{f}_i$.
Therefore, $\tilde{f}_j\circ \lambda= g\circ \mu\circ\tilde{f}_i$.
It follows that the maps $\bar{f}_i$ commute with the embeddings 
$$
\lambda_\ast
\colon {\rm Fr}(\tilde{U}_i)/G_i\to {\rm Fr}(\tilde{U}_j)/G_j\,\,
{\rm and} \,\, \mu_\ast \colon {\rm Fr}(\tilde{V}_i)/G_i\to {\rm Fr}(\tilde{V}_j)/G_j
$$
that are used to define the frame bundles ${\rm Fr}(X)$ and
${\rm Fr}(Y)$, respectively.
Thus they induce a
${\rm GL}_n({\mathbb{R}})$-equivariant 
${\rm C}^{r-1}$-diffeomorphism $\hat{f}\colon {\rm Fr}(X)\to
{\rm Fr}(Y)$.
\end{proof}

Notice that in Theorem \ref{mainthm} we assume that $2\leq r$ instead of $1\leq r$.  
The reason for doing so is that  the proof of Theorem \ref{mainthm}
uses Lemma \ref{redapu}. 

\medskip 

\noindent{\it Proof of Theorem \ref{mainthm}.}
Let $X$ and $Y$ be  reduced ${\rm C}^r$-orbifolds, $2\leq r\leq\omega$, and
let $f\colon X\to Y$ be a ${\rm C}^2$-diffeomorphism. According to Theorem 8.4 in
\cite{Ka}, there are real analytic orbifolds $X^\omega$ and $Y^\omega$ and
${\rm C}^r$-diffeomorphisms $f_1\colon X\to X^\omega$ and $f_2\colon
Y\to Y^\omega$. By Theorem \ref{frame}, there are real analytic diffeomorphisms
$h_1\colon X^\omega\to {\rm Fr}(X^\omega)/{\rm GL}_n({\mathbb{R}})$ and
$h_2\colon Y^\omega\to {\rm Fr}(Y^\omega)/{\rm GL}_n({\mathbb{R}})$.
Let $g= f_2\circ f\circ f_1^{-1}\colon X^\omega\to Y^\omega$. 
Then $g$ is a
${\rm C}^2$-diffeomorphism, and 
by Lemma \ref{redapu} it induces
a ${\rm GL}_n({\mathbb{R}})$-equivariant ${\rm C}^1$-diffeomorphism
$\hat{g}\colon {\rm Fr}(X^\omega)\to {\rm Fr}(Y^\omega)$.
Now, $\hat{g}$ induces a ${\rm C}^1$-diffeomorphism
$\tilde{g}\colon {\rm Fr}(X^\omega)/{\rm GL}_n({\mathbb{R}}) \to
{\rm Fr}(Y^\omega)/{\rm GL}_n({\mathbb{R}})$ and the  diagram 

$$
\begin{CD}
{\rm Fr}(X^\omega) @>\hat{g}>> {\rm Fr}(Y^\omega)\\
@VV\pi_1V  @VV\pi_2V\\
{\rm Fr}(X^\omega)/{\rm GL}_n({\mathbb{R}})
@>\tilde{g}>>
{\rm Fr}(Y^\omega)/{\rm GL}_n({\mathbb{R}})\\
@VVh_1^{-1}V @VVh_2^{-1}V\\
X^\omega @>g>> Y^\omega\\
@VVf_1^{-1}V   @VVf_2^{-1}V\\
X @>f>> Y
\end{CD}
$$

commutes. By Corollary IIa in \cite{IK}, there is a 
${\rm GL}_n({\mathbb{R}})$-equivariant
real analytic diffeomorphism $\hat{h}\colon
 {\rm Fr}(X^\omega)\to {\rm Fr}(Y^\omega)$. Then
 $\hat{h}$ induces a real analytic diffeomorphism
 $\tilde{h}\colon 
 {\rm Fr}(X^\omega)/{\rm GL}_n({\mathbb{R}}) \to
{\rm Fr}(Y^\omega)/{\rm GL}_n({\mathbb{R}})$.
It follows that $h=h_2^{-1}\circ \tilde{h}\circ h_1\colon
X^\omega\to Y^\omega$ is a real analytic diffeomorphism.
Therefore, $h^r=f_2^{-1}\circ h\circ f_1\colon
X\to Y$ is a ${\rm C}^r$-diffeomorphism. 
\qed

\end {document}